\documentclass[11pt]{article}

\newcommand{\PaperTitle}{Subtree Counts and the Maximum Mean Subtree Order}
\newcommand{\PaperAuthor}{Jungang Chen, Xian'an Jin, Zhuo Li, and Hongxia Ma}
\newcommand{\PaperAffiliationXiamen}{School of Mathematical Sciences, Xiamen University, P. R. China}
\newcommand{\PaperAffiliationQinghai}{School of Mathematics and Statistics, Qinghai Minzu University, P. R. China}
\newcommand{\PaperAffiliationXinjiang}{Xinjiang Normal University, P. R. China}
\newcommand{\PaperEmailChen}{jgchen@stu.xmu.edu.cn}
\newcommand{\PaperEmailJin}{xajin@xmu.edu.cn}
\newcommand{\PaperEmailLi}{lzhuo@stu.xmu.edu.cn}
\newcommand{\PaperEmailMa}{hongxiama516@163.com}
\newcommand{\PaperDate}{September 2026}
\usepackage{amsmath,amssymb,amsthm,mathtools}
\usepackage{microtype}
\usepackage{enumitem}
\setlist{nosep,leftmargin=2em}
\usepackage[numbers,square,sort]{natbib}
\let\parencite\citep
\let\textcite\citet

\usepackage{hyperref}
\usepackage[nameinlink,noabbrev]{cleveref}
\usepackage{bookmark}
\usepackage{csquotes}

\hypersetup{
  colorlinks=true,
  linkcolor=black,
  citecolor=black,
  urlcolor=black,
  pdfborder={0 0 0}
}

\numberwithin{equation}{section}
\allowdisplaybreaks[3]

\newtheoremstyle{paperplain}
  {8pt}{8pt}{\itshape}{}
  {\bfseries}{.}{0.5em}{}
\newtheoremstyle{paperdefinition}
  {8pt}{8pt}{}{}
  {\bfseries}{.}{0.5em}{}
\newtheoremstyle{paperremark}
  {6pt}{6pt}{}{}
  {\itshape}{.}{0.5em}{}

\theoremstyle{paperdefinition}
\newtheorem{definition}{Definition}[section]
\newtheorem{lemma}[definition]{Lemma}
\newtheorem{proposition}[definition]{Proposition}
\newtheorem{theorem}[definition]{Theorem}
\newtheorem{corollary}[definition]{Corollary}
\theoremstyle{paperremark}
\newtheorem{remark}[definition]{Remark}

\newcommand{\lesub}{\mathrel{\le_{\mathrm{sub}}}}
\newcommand{\prert}{\mathrel{\preceq_{\mathrm{rt}}}}
\newcommand{\Autrt}{\operatorname{Aut}_{\mathrm{rt}}}
\newcommand{\Inj}{\operatorname{Inj}}
\newcommand{\Law}{\operatorname{Law}}

\newcommand{\PGW}{\operatorname{PGW}}

\newcommand{\proofstep}[1]{\par\medskip\noindent\textbf{#1}\par\nobreak\smallskip}

\crefname{definition}{definition}{definitions}
\crefname{lemma}{lemma}{lemmas}
\crefname{proposition}{proposition}{propositions}
\crefname{theorem}{theorem}{theorems}
\crefname{corollary}{corollary}{corollaries}
\crefname{remark}{remark}{remarks}

\hypersetup{
  pdftitle={\PaperTitle},
  pdfauthor={\PaperAuthor},
  pdfsubject={Subtree counts and extremal mean subtree order},
  pdfkeywords={mean subtree order, complete graph, stochastic domination, coupling, Cayley tree, Galton-Watson tree}
}

\title{\bfseries \PaperTitle}
\author{%
  Jungang Chen\thanks{\raggedright\PaperAffiliationXiamen.
    \href{mailto:\PaperEmailChen}{\texttt{\PaperEmailChen}}.}\quad
  Xian'an Jin\thanks{\raggedright\PaperAffiliationXiamen;
    \PaperAffiliationQinghai.
    \href{mailto:\PaperEmailJin}{\texttt{\PaperEmailJin}}.}\quad
  Zhuo Li\thanks{\raggedright\PaperAffiliationXiamen.
    \href{mailto:\PaperEmailLi}{\texttt{\PaperEmailLi}}.}\quad
  Hongxia Ma\thanks{\raggedright\PaperAffiliationXinjiang.
    Corresponding author.
    \href{mailto:\PaperEmailMa}{\texttt{\PaperEmailMa}}.}%
}
\date{\PaperDate}

\begin{document}
\maketitle

\begin{abstract}
For a finite simple graph $G$ of positive order, let $s_k(G)$ be the number of its $k$-vertex tree subgraphs. We prove that, among graphs of a fixed order $n$, the ratios $s_k(G)/s_k(K_n)$ form a nonincreasing sequence in $k$. It follows that the complete graph maximizes the mean subtree order: $\mu(G)\leq\mu(K_n)$, with equality if and only if $G$ is complete. This monotonicity theorem also establishes the $s_{n-1}$-to-$s_n$ ratio conjecture posed in recent work on extremal mean subtree order. The proof lifts a known coupling of rooted Cayley-tree shapes of consecutive orders to uniform labeled tree subgraphs of $K_n$ that are nested at every sample point. Event inclusion gives the count inequalities, and a double-sum identity gives the mean bound.
\end{abstract}

\medskip
\noindent\textbf{Keywords:} mean subtree order; subtree counts; complete graph; extremal graph theory; probabilistic coupling; Cayley tree.

\section{Introduction}\label{sec:introduction}

For a finite simple graph $G$ of order $n\geq1$, a \emph{subtree} is a tree
that occurs as a not necessarily induced subgraph of $G$. Writing $s_k(G)$
for the number of subtrees with $k$ vertices, we study the mean subtree order
\[
  \mu(G)=\frac{\sum_{k=1}^n k s_k(G)}{\sum_{k=1}^n s_k(G)}.
\]
We measure subtree order by the number of vertices. For the same collection
of subtrees, the mean subtree size measured by edges, as in
\textcite{ChinEtAl2018}, is one less than the mean subtree order. This
difference does not affect inequalities or equality comparisons between
mean subtree values.

The behavior of the mean under edge addition has been a central source of
extremal questions. Chin, Gordon, MacPhee, and Vincent
\parencite[Conj.~7.4]{ChinEtAl2018} conjectured that adding any missing edge to
a connected graph increases its mean subtree size. If true, that assertion
would imply that $K_n$ uniquely maximizes the mean among connected graphs of
order $n$. Cameron and Mol disproved the edge-by-edge monotonicity conjecture
and proposed the weaker local conjecture that every noncomplete connected
graph has at least one missing edge whose addition increases the mean
\parencite{CameronMol2021}. Chen, Wei, and Lian and, independently, Cambie,
Chen, Hao, and Tokar proved that, for every positive integer $k$, there is a
graph and a choice of $k$ missing edges whose addition decreases the mean
subtree order \parencite{ChenWeiLian2023,CambieEtAl2024}.

The global extremal problem is logically distinct from this local edge-addition
problem. Proving the weaker local conjecture of Cameron and Mol would give a
path to the complete graph along which the mean increases, and hence would
imply global extremality. The global extremal conclusion alone does not
establish this local conjecture. Our argument compares subtree counts across
orders within a fixed graph.

Cambie, Jooken, and Wagner recently formulated the complete-graph upper bound
for the mean subtree order both for connected graphs and, separately, for all
graphs \parencite[Conjs.~1.1(ii) and 5.3]{CambieJookenWagner2025}. They also
conjectured, for $n\geq2$, the endpoint ratio inequality
comparing orders $n-1$ and $n$
\parencite[Conjs.~1.7 and 5.1]{CambieJookenWagner2025}. They proved that, if
this endpoint inequality holds for every finite simple graph, then the full
adjacent-order chain follows, and hence so does the complete-graph mean bound
\parencite[Lem.~5.7 and Thm.~5.6]{CambieJookenWagner2025}. Our principal result proves this
full chain directly: for every simple graph $G$ of order $n\geq1$,
\begin{equation}\label{eq:intro-ratio}
  1=\frac{s_1(G)}{s_1(K_n)}
  \geq\frac{s_2(G)}{s_2(K_n)}
  \geq\cdots\geq\frac{s_n(G)}{s_n(K_n)}\geq0.
\end{equation}
In particular, when $n\geq2$, the case comparing orders $n-1$ and $n$ is the ratio
inequality in Conjectures~1.7 and 5.1 of that work. A double-sum identity then turns
\eqref{eq:intro-ratio} into
\[
  \mu(G)\leq\mu(K_n),
\]
with equality precisely when $G$ is complete. This scope includes disconnected
graphs, as in their Conjecture~5.3, and is broader than the connected-graph
formulation in their Conjecture~1.1(ii).

The proof uses the root-preserving stochastic domination of
Poisson--Galton--Watson trees conditioned on consecutive finite orders,
as stated in Theorem~2.1 of Lyons, Peled, and Schramm
\parencite{LyonsPeledSchramm2008}. We verify that each conditioned law is
the rooted-shape distribution of a uniform labeled Cayley tree with a uniform
root. We then lift the resulting shape coupling to labeled trees. For each
compatible pair of rooted shapes, we fix a root-preserving embedding and apply
a uniformly random injection into $[n]$. Automorphism counts show that, after
the roots are forgotten, the two marginals are uniform over the actual tree
subgraphs of $K_n$ of the corresponding orders. Pointwise containment then
becomes event inclusion for the subtrees lying in $G$, which proves
\eqref{eq:intro-ratio}.

Our probability notation and the basic definitions of probability spaces,
random elements, distributions, independence, and conditional probability
follow Durrett \parencite[Sections~1.1--1.3, 2.1, and 4.1]{Durrett2019}.
Sections~\ref{sec:notation} and~\ref{sec:coupling} specify our finite-state
conventions, rooted-tree shapes, and couplings. The order relation on shapes
and the tree-growth theorem follow Lyons, Peled, and Schramm.

\section{Subtree Counts and Notation}\label{sec:notation}

Every graph $G$ in the extremal problem is finite and simple. The auxiliary
probabilistic tree introduced in Section~\ref{sec:coupling} may be infinite
before it is conditioned to have a fixed order. For concrete graphs $H$ and
$G$, we write $H\lesub G$ when $V(H)\subseteq V(G)$ and
$E(H)\subseteq E(G)$. We use $\prert$ for root-preserving embeddings between
unlabeled rooted-tree isomorphism classes and $\subseteq$ for set or event
inclusion. Thus $\lesub$ always compares actual graphs with specified vertex
and edge sets. For graph-valued random elements, we write
$X(\omega)\lesub Y(\omega)$ to indicate containment at the sample point
$\omega$.

Fix an integer $n\geq1$ and label the vertices of the complete graph by
$V(K_n)=[n]=\{1,\dots,n\}$.
Because all quantities considered below are invariant under graph
isomorphism, any graph of order $n$ may be relabeled to have vertex set
$[n]$. For $1\le k\le n$, let
\[
  \mathcal T_{n,k}
  =\{H:H\lesub K_n,\ H\text{ is a tree with }k\text{ vertices}\}.
\]
If $G$ is a simple graph on the same vertex set $[n]$, define
\[
  s_k(G)=\bigl|\{H\in\mathcal T_{n,k}:H\lesub G\}\bigr|.
\]
In particular, $s_k(K_n)=|\mathcal T_{n,k}|$. For $k=1$ there is one tree
on each one-element vertex set, so $s_1(K_n)=n$. For $k\geq2$, Cayley's
formula \parencite[Chap.~1]{Moon1970} gives
\[
  s_k(K_n)=\binom nk k^{k-2}
  \quad(2\le k\le n).
\]
The mean subtree order of $G$ is defined by
\[
  \mu(G)=
  \frac{\sum_{k=1}^{n}k s_k(G)}{\sum_{k=1}^{n}s_k(G)}.
\]
The denominator is always positive because $s_1(G)=n$.

\begin{definition}[Rooted labeled tree set]\label{def:rooted-labelled}
For $1\le k\le n$, let
\[
  \widehat{\mathcal T}_{n,k}
  =\{(H,v):H\in\mathcal T_{n,k},\ v\in V(H)\}.
\]
Thus, $\widehat{\mathcal T}_{n,k}$ is the set of all actual $k$-vertex tree subgraphs of $K_n$, each equipped with a distinguished root.
\end{definition}

For nonnegative integers $x$ and $k$, let $(x)_0=1$ and write
\[
  (x)_k=x(x-1)\cdots(x-k+1)\quad(k\geq1)
\]
for the falling factorial.
In particular, $(x)_k=0$ when $k>x$.

\subsection*{Probability conventions}

A probability space is a triple $(\Omega,\mathcal F,P)$, where $\mathcal F$
is a $\sigma$-field of subsets of $\Omega$ and $P$ is a countably additive
probability measure on $\mathcal F$. On a finite set, we take $\mathcal F$
to be its power set $2^\Omega$ and abbreviate the space to $(\Omega,P)$.
For a finite set $S$, an $S$-valued random element is a map $X:\Omega\to S$
whose inverse images are events. Its distribution, or law, is the measure
$\Law_P(X)$ on $2^S$ given by $\Law_P(X)(C)=P(X\in C)$ for $C\subseteq S$.
We omit the subscript when the underlying measure is clear. This is the
finite-state form of the random-element and distribution conventions in Durrett
\parencite[Sections~1.2--1.3]{Durrett2019}. For any probability measure $\alpha$
on a finite set, we write $\alpha(x)$ for $\alpha(\{x\})$.

A random element is uniform on a nonempty finite set $S$ if each element
has probability $1/|S|$. A statement holds \emph{almost surely} (a.s.) if the
event on which it holds has probability one. For events $A$ and $B$ with
\mbox{$P(B)>0$}, conditional probability is defined by
$P(A\mid B)=P(A\cap B)/P(B)$
\parencite[Section~4.1.1, Example~4.1.5]{Durrett2019}.

\section{A Coupling of Rooted-Tree Shapes}\label{sec:coupling}

For a positive integer $r$, let $\mathcal R_r$ denote the finite set of
root-preserving isomorphism classes of unlabeled rooted trees on $r$ vertices.
An element $\rho\in\mathcal R_r$ is a rooted-tree isomorphism class, which
we call a \emph{rooted-tree shape}. For positive integers $r$ and $s$, let
$\rho\in\mathcal R_r$ and $\sigma\in\mathcal R_s$. We write
$\rho\prert\sigma$ if and only if a representative $R_\rho$ is isomorphic
to a subtree $H$ of a representative $R_\sigma$ by a map that sends the root
of $R_\rho$ to the root of $R_\sigma$. Thus $H$ contains that root but need
not be all of $R_\sigma$. This definition is independent of the choice of
representatives.

\begin{definition}[Cayley-tree shape distribution]\label{def:nu}
Choose a labeled tree uniformly at random on $[r]$, choose a root uniformly at
random, and then forget all labels. Denote the resulting probability measure
on $\mathcal R_r$ by $\nu_r$.
\end{definition}

\begin{lemma}[Shape probability]\label{lem:shape-probability}
Let $\rho\in\mathcal R_r$, and let $a(\rho)=|\Autrt(R_\rho)|$ be the
order of the root-preserving automorphism group of any representative
$R_\rho$. Then
\begin{equation}\label{eq:nu-shape}
  \nu_r(\rho)=\frac{r!}{a(\rho)r^{r-1}}.
\end{equation}
\end{lemma}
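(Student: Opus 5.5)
We will obtain \eqref{eq:nu-shape} by counting. The sample space behind $\nu_r$ is the set of rooted labeled trees $(T,v)$, where $T$ is a tree on $[r]$ and $v\in[r]$. By Cayley's formula there are $r^{r-2}$ labeled trees and $r$ choices of root, so this set has $r^{r-1}$ elements, each equally likely. Hence
\[
  \nu_r(\rho)=\frac{N(\rho)}{r^{r-1}},
\]
where $N(\rho)$ is the number of rooted labeled trees on $[r]$ whose rooted shape is $\rho$. It therefore suffices to prove that
\[
  N(\rho)=\frac{r!}{a(\rho)}.
\]

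To count $N(\rho)$, fix a representative $R_\rho$ with vertex set $V$, $|V|=r$, and root $v_0$. Consider the set of bijections $\phi:V\to[r]$. Each bijection $\phi$ transports $R_\rho$ to a labeled tree $\phi(R_\rho)$ on $[r]$ with root $\phi(v_0)$, and this rooted labeled tree has shape $\rho$. The map $\Phi:\phi\mapsto(\phi(R_\rho),\phi(v_0))$ lands in the set counted by $N(\rho)$.

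Next we check that $\Phi$ is surjective onto that set. If $(T,v)$ has shape $\rho$, then by definition there is a root-preserving isomorphism $R_\rho\to(T,v)$, and that isomorphism is a bijection $V\to[r]$ that $\Phi$ sends to $(T,v)$.

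The fibers come from the root-preserving automorphism group, which acts freely on bijections by precomposition: $\phi\mapsto\phi\circ\alpha$ for $\alpha\in\Autrt(R_\rho)$. We claim that $\Phi(\phi)=\Phi(\psi)$ holds exactly when $\psi^{-1}\circ\phi\in\Autrt(R_\rho)$.
\begin{itemize}
\item If $\Phi(\phi)=\Phi(\psi)$, then $\psi^{-1}\circ\phi$ maps the edges of $R_\rho$ onto the edges of $R_\rho$ and fixes $v_0$. So it is a root-preserving automorphism.
\item Conversely, precomposing $\phi$ with such an automorphism does not change the image tree or the image of the root.
\end{itemize}
Therefore every fiber of $\Phi$ is a coset of size $a(\rho)$. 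Since there are $r!$ bijections in total, we get $N(\rho)=r!/a(\rho)$, which gives \eqref{eq:nu-shape}.

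The only point needing care is this fiber identification: one must check that equal labeled rooted images force $\psi^{-1}\circ\phi$ to preserve both adjacency and the root. This follows directly from the definitions. Finally, $a(\rho)$ does not depend on the chosen representative, because isomorphic rooted trees have isomorphic root-preserving automorphism groups. The case $r=1$ also fits the argument: $N(\rho)=1$, $a(\rho)=1$, and $r^{r-1}=1$.
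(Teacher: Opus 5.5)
Your proof is correct and takes the same route as the paper: you count the $r!/a(\rho)$ rooted labeled trees of shape $\rho$ by an orbit--stabilizer argument and divide by the $r^{r-1}$ rooted labeled trees given by Cayley's formula. The only difference is that you spell out the orbit--stabilizer step explicitly, through the fibers of $\Phi$, where the paper simply cites the theorem.
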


\begin{proof}
If $r=1$, both sides of \eqref{eq:nu-shape} equal $1$. Suppose that $r\geq2$.
On the fixed label set $[r]$, the orbit--stabilizer theorem gives
$r!/a(\rho)$ distinct rooted labeled trees of rooted shape $\rho$. Cayley's
formula gives $r^{r-2}$ unrooted labeled trees, each with $r$ choices of root,
so there are $r^{r-1}$ rooted labeled trees in total. Taking the ratio yields
\eqref{eq:nu-shape}.
\end{proof}

\subsection*{The external theorem in its source form}

The only non-elementary probability result used in
Theorem~\ref{thm:shape-coupling} is Theorem~2.1 of Lyons, Peled, and Schramm.
We quote its statement and the definition of its random trees.

\begin{quote}
\emph{Let $T_n=T_n(\lambda)$ be a $\PGW(\lambda)$ tree conditioned to have
$n$ vertices, where $n\in\mathbb N_{+}\cup\{\infty\}$. We consider the values
of $T_n$ to be equivalence classes of rooted trees under isomorphisms that
preserve the root.}

\medskip
\noindent\textbf{Theorem 2.1.}
\emph{$T_{n+1}$ stochastically dominates $T_n$ for every
\mbox{$n\in\mathbb N_{+}$}.}
\end{quote}
The definition and theorem appear in
\parencite[Section~2, Thm.~2.1]{LyonsPeledSchramm2008}. The theorem compares
fixed finite orders $r$ and $r+1$; we do not use the source variable
$T_\infty$.

\subsection*{The probability model and its coupling}

\proofstep{Poisson--Galton--Watson tree.}
For $\lambda>0$, let $T$ be an unconditioned $\PGW(\lambda)$ genealogical
tree on a probability space with measure $P_\lambda$. From a single root, it is generated
recursively as follows.

For every vertex $v$, independently choose its number $Z_v$ of children with
the $\operatorname{Poisson}(\lambda)$ law
\parencite[Example~1.6.13]{Durrett2019}:
\[
  P_\lambda(Z_v=k)=e^{-\lambda}\frac{\lambda^k}{k!}
  \qquad(k=0,1,2,\dots).
\]
The descendant trees rooted at distinct children are therefore independent
and have the same unconditioned $\PGW(\lambda)$ law. We forget the order in
which the children are generated, so the resulting object is an unordered
rooted tree. Before conditioning, it may be infinite. The offspring construction agrees
with the Galton--Watson process in Durrett
\parencite[Section~4.3.4]{Durrett2019}. Here we retain the genealogical tree,
whereas that process records the number of individuals in each generation.

\proofstep{Conditioning on the number of vertices.}
Write $|T|$ for the total number of vertices of $T$, including the root.
For $m\geq1$, the source variable $T_m=T_m(\lambda)$ is a random element of
$\mathcal R_m$ with the conditional law of $T$ given $|T|=m$. Denote this
probability measure by
\begin{equation}\label{eq:conditioned-law-definition}
  P_{\lambda,m}:=\Law(T_m).
\end{equation}
Thus, for $\rho\in\mathcal R_m$,
\[
  P_{\lambda,m}(\{\rho\})
  =P_\lambda
    (T\cong_{\mathrm{rt}}R_\rho\mid |T|=m),
\]
where $\cong_{\mathrm{rt}}$ denotes root-preserving isomorphism. The source
identifies this conditional law with the shape distribution of a uniform
labeled tree with a uniform root. We verify the identification below:
\begin{equation}\label{eq:conditioned-law-roadmap}
  \Law(T_m)=P_{\lambda,m}=\nu_m.
\end{equation}
The conditioning is well defined because the Borel total-progeny formula
gives
\[
  P_\lambda(|T|=m)
  =\frac{(\lambda e^{-\lambda})^m m^{m-1}}{\lambda m!}
  =\frac{e^{-\lambda m}\lambda^{m-1}m^{m-1}}{m!}>0.
\]
When $\lambda>1$, these finite-size masses sum to the extinction probability,
and the remaining mass is carried by $|T|=\infty$; only fixed finite
conditioning events are used below. The definition of $T_m$ and the Borel
formula are given together in the source of the quoted theorem
\parencite[Section~2, Eq.~(2.1)]{LyonsPeledSchramm2008}.

\proofstep{Domination of rooted trees.}
In the terminology of Lyons, Peled, and Schramm, a rooted tree $(S,o_S)$
dominates a rooted tree $(R,o_R)$ if there is an isomorphism from $R$ onto a
subtree of $S$ that maps $o_R$ to $o_S$
\parencite[Section~1, after Thm.~1.2]{LyonsPeledSchramm2008}. Thus, in the
notation introduced at the beginning of this section, $\sigma$ dominates
$\rho$ precisely when $\rho\prert\sigma$.

\begin{definition}[Coupling and stochastic domination]\label{def:coupling}
Let $\alpha$ and $\beta$ be probability measures on finite sets $A$ and $B$,
respectively. A coupling of $\alpha$ and $\beta$ is a probability measure
$\pi$ on $A\times B$ such that
\begin{align*}
  \sum_{b\in B}\pi(a,b)&=\alpha(a)\quad(a\in A),\\
  \sum_{a\in A}\pi(a,b)&=\beta(b)\quad(b\in B).
\end{align*}
Equivalently, $\pi$ is the joint law of random elements $X$ and $Y$ on a
common probability space with measure $P$, with respective laws $\alpha$
and $\beta$. In this realization, $\pi(a,b)=P(X=a,Y=b)$, and the displayed
identities specify the marginals $\Law_P(X)$ and $\Law_P(Y)$.

Now suppose that $A$ and $B$ consist of rooted-tree shapes and are equipped
with the relation $\prert$. We say that $\beta$ \emph{stochastically dominates}
$\alpha$ if they have a coupling $(X,Y)$ such that
\[
  P(X\prert Y)=1.
\]
Equivalently, their coupling measure gives mass zero to every pair $(a,b)$
for which $a\not\prert b$. 
\end{definition}

This finite-state definition agrees with that of Lyons, Peled, and Schramm
\parencite[Section~1, after Thm.~1.2]{LyonsPeledSchramm2008}. Their theorem
supplies a coupling with root-preserving containment almost surely; it does
not concern independently sampled trees.

\begin{samepage}
\begin{theorem}[Coupling of consecutive Cayley-tree shapes]\label{thm:shape-coupling}
For every \mbox{$r\ge1$}, there exists a probability measure $\pi_r$ on
$\mathcal R_r\times\mathcal R_{r+1}$ such that
\begin{align}
  \sum_{\sigma\in\mathcal R_{r+1}}\pi_r(\rho,\sigma)
  &=\nu_r(\rho)
  &&(\rho\in\mathcal R_r),\label{eq:first-marginal}\\
  \sum_{\rho\in\mathcal R_r}\pi_r(\rho,\sigma)
  &=\nu_{r+1}(\sigma)
  &&(\sigma\in\mathcal R_{r+1}),\label{eq:second-marginal}\\
  \pi_r(\rho,\sigma)&=0
  &&\text{if }\rho\not\prert\sigma.\label{eq:support}
\end{align}
\end{theorem}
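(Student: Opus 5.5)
The plan is to obtain $\pi_r$ directly from Theorem~2.1 of Lyons, Peled, and Schramm, after identifying the conditioned Poisson--Galton--Watson laws with the Cayley-tree shape distributions, that is, after verifying \eqref{eq:conditioned-law-roadmap}. Fix any $\lambda>0$; the natural choice is $\lambda=1$, though any value works since the conditioned law will turn out not to depend on $\lambda$.

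\textbf{Step 1: $P_{\lambda,m}=\nu_m$ for every $m\ge1$.} I would compute $P_\lambda(T\cong_{\mathrm{rt}}R_\rho)$ for $\rho\in\mathcal R_m$ directly. Label the generation process in plane (ordered) form: the probability of a specific plane tree with offspring numbers $(z_v)_v$ is $\prod_v e^{-\lambda}\lambda^{z_v}/z_v!$. Because $\sum_v z_v=m-1$, this equals $e^{-\lambda m}\lambda^{m-1}\prod_v (z_v!)^{-1}$.

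Next, count the plane trees in the unordered class $\rho$. Their number is $\prod_v z_v!/a(\rho)$. One way to see this is that the group generated by permuting children at each vertex acts transitively on plane representatives, and the stabilizer corresponds to $\Autrt(R_\rho)$. Alternatively, one can argue recursively on the root's subtrees, tracking multiplicities of isomorphic branches.

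Multiplying the two gives
\[
P_\lambda(T\cong_{\mathrm{rt}}R_\rho)=\frac{e^{-\lambda m}\lambda^{m-1}}{a(\rho)}.
\]
Dividing by the Borel formula $P_\lambda(|T|=m)=e^{-\lambda m}\lambda^{m-1}m^{m-1}/m!$ yields $m!/(a(\rho)m^{m-1})$. By Lemma~\ref{lem:shape-probability} this is $\nu_m(\rho)$. As a consistency check, summing over $\rho$ recovers Borel's formula via Lemma~\ref{lem:shape-probability}. In fact this gives an independent derivation, so I need not cite Borel separately beyond positivity of the conditioning event.

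\textbf{Step 2: apply the cited theorem.} Theorem~2.1 with $n=r$ says $T_{r+1}$ stochastically dominates $T_r$. By the cited definition of domination, which matches Definition~\ref{def:coupling} with $\sigma$ dominating $\rho$ iff $\rho\prert\sigma$, there are random elements $X,Y$ on a common space with $\Law(X)=P_{\lambda,r}$, $\Law(Y)=P_{\lambda,r+1}$ and $P(X\prert Y)=1$. Set $\pi_r(\rho,\sigma)=P(X=\rho,Y=\sigma)$. Then \eqref{eq:first-marginal} and \eqref{eq:second-marginal} follow from Step~1, and \eqref{eq:support} holds because any pair with $\rho\not\prert\sigma$ lies in a null event. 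Since both state spaces are finite, a possibly non-discrete coupling in the source can be pushed forward to this finite joint law without loss. The case $r=1$ is also trivial directly: the single-vertex shape embeds root-preservingly into every shape, so the product measure works.

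\textbf{Main obstacle.} The delicate point is the automorphism count in Step~1: showing that the number of plane orderings of $R_\rho$ equals $\prod_v z_v!/a(\rho)$. I would prove this by orbit--stabilizer. The product of symmetric groups on the children sets of a fixed labeled representative acts on orderings, and two orderings give the same plane tree exactly when they differ by a root-preserving automorphism. This needs care to state the identification of plane trees with orbits correctly.
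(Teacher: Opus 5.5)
Your proposal is correct and follows the paper's proof. Both compute $P_\lambda(T\cong_{\mathrm{rt}}R_\rho)=e^{-\lambda m}\lambda^{m-1}/a(\rho)$, divide by the Borel formula to get $P_{\lambda,m}=\nu_m$ via Lemma~\ref{lem:shape-probability}, and take $\pi_r$ to be the joint law of the Lyons--Peled--Schramm coupling. The only variation is how the shape formula is proved. You count plane orderings globally, which needs the fact that $\Autrt(R_\rho)$ acts freely on orderings, so each plane tree comes from exactly $a(\rho)$ of the $\prod_v z_v!$ orderings. The paper instead inducts on $m$ using the root-branch factorization $a(\rho)=\prod_j b_j!\,a(\xi_j)^{b_j}$. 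Your remark that summing over $\rho$ recovers Borel's formula is a small bonus.
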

\end{samepage}

\begin{proof}
Fix $r\geq1$ and $\lambda>0$. By Theorem~2.1 quoted above,
\eqref{eq:conditioned-law-definition}, and Definition~\ref{def:coupling},
there are random rooted-tree shapes
$Y_r\in\mathcal R_r$ and $Y_{r+1}\in\mathcal R_{r+1}$ on one probability
space with measure $Q_r$ such that
\begin{align*}
  \Law_{Q_r}(Y_r)&=P_{\lambda,r},\\
  \Law_{Q_r}(Y_{r+1})&=P_{\lambda,r+1},\\
  Q_r(Y_r\prert Y_{r+1})&=1.
\end{align*}
It remains to identify the two conditioned laws with $\nu_r$ and
$\nu_{r+1}$.

For the unordered tree $T$ defined above, we claim that, for every
$m\geq1$ and every
$\rho\in\mathcal R_m$,
\begin{equation}\label{eq:pgw-shape}
  P_\lambda(T\cong_{\mathrm{rt}}R_\rho)
  =\frac{e^{-\lambda m}\lambda^{m-1}}{a(\rho)}.
\end{equation}
We prove the claim by induction on $m$. If $m=1$, the root has no children,
an event of probability $e^{-\lambda}$, in agreement with
\eqref{eq:pgw-shape}. Now suppose that $m>1$ and that the root of $R_\rho$ has
$k$ children. Every branch below the root has fewer than $m$ vertices, so the
induction hypothesis applies to it. After deleting the root, group the rooted
branches by root-preserving isomorphism type. Let the distinct types be
$\xi_1,\dots,\xi_s$, with respective multiplicities $b_1,\dots,b_s$, where
$\sum_jb_j=k$. For a rooted-tree isomorphism class $\xi$, let $|\xi|$ denote
the number of vertices in any representative.
\begin{samepage}
A root-preserving automorphism may act within each branch and may also permute
branches of the same type. Hence
\begin{equation}\label{eq:aut-decomposition}
  a(\rho)=\prod_{j=1}^s b_j!\,a(\xi_j)^{b_j}.
\end{equation}
\end{samepage}
The probability that the root has exactly $k$ children is
$e^{-\lambda}\lambda^k/k!$. Temporarily label these children. Since their
branches are independent and identically distributed, forgetting the
temporary labels contributes the multinomial factor
$k!/\prod_{j=1}^s b_j!$. The induction hypothesis therefore gives
\begin{align*}
  P_\lambda(T\cong_{\mathrm{rt}}R_\rho)
  &=\frac{e^{-\lambda}\lambda^k}{k!}
    \frac{k!}{\prod_{j=1}^s b_j!}
    \prod_{j=1}^s
    \left(
      \frac{e^{-\lambda|\xi_j|}
            \lambda^{|\xi_j|-1}}
           {a(\xi_j)}
    \right)^{b_j}\\
  &=\frac{e^{-\lambda m}\lambda^{m-1}}
          {\prod_{j=1}^s b_j!\,a(\xi_j)^{b_j}}
   =\frac{e^{-\lambda m}\lambda^{m-1}}{a(\rho)},
\end{align*}
where we used \eqref{eq:aut-decomposition} and
\begin{align*}
  m&=1+\sum_{j=1}^s b_j|\xi_j|,\\
  k+\sum_{j=1}^s b_j(|\xi_j|-1)&=m-1.
\end{align*}
This proves \eqref{eq:pgw-shape}.

Because $T\cong_{\mathrm{rt}}R_\rho$ already implies $|T|=m$, the definition
of conditional probability, \eqref{eq:pgw-shape}, and the Borel formula give
\begin{align}
  P_{\lambda,m}(\{\rho\})
  &=\frac{P_\lambda(T\cong_{\mathrm{rt}}R_\rho)}
          {P_\lambda(|T|=m)}\notag\\
  &=\frac{e^{-\lambda m}\lambda^{m-1}/a(\rho)}
          {e^{-\lambda m}\lambda^{m-1}m^{m-1}/m!}\notag\\
  &=\frac{m!}{a(\rho)m^{m-1}}
   =\nu_m(\rho).\label{eq:conditioned-shape}
\end{align}
The right-hand side is independent of $\lambda$. Since this holds for
every $\rho\in\mathcal R_m$, it proves \eqref{eq:conditioned-law-roadmap}:
\begin{equation}\label{eq:T-equals-nu}
  \Law(T_m)=P_{\lambda,m}=\nu_m.
\end{equation}

Define the joint mass of $(Y_r,Y_{r+1})$ by
\begin{equation}\label{eq:pi-joint-law}
  \pi_r(\rho,\sigma)
  :=Q_r(Y_r=\rho,\ Y_{r+1}=\sigma).
\end{equation}
By \eqref{eq:T-equals-nu}, the marginal identities are, for
$\rho\in\mathcal R_r$ and $\sigma\in\mathcal R_{r+1}$,
\begin{align*}
  \sum_{\sigma'\in\mathcal R_{r+1}}\pi_r(\rho,\sigma')
  &=Q_r(Y_r=\rho)=\nu_r(\rho),\\
  \sum_{\rho'\in\mathcal R_r}\pi_r(\rho',\sigma)
  &=Q_r(Y_{r+1}=\sigma)=\nu_{r+1}(\sigma).
\end{align*}
These are \eqref{eq:first-marginal} and \eqref{eq:second-marginal}.
Finally, if $\rho\not\prert\sigma$, then
\[
  \{Y_r=\rho,Y_{r+1}=\sigma\}
  \subseteq\{Y_r\not\prert Y_{r+1}\}.
\]
The event on the right has $Q_r$-probability zero, so
$\pi_r(\rho,\sigma)=0$. This is \eqref{eq:support} and completes the proof.
\end{proof}

\section{Lifting the Shape Coupling to Labeled Trees}\label{sec:lifting}

\begin{proposition}[A coupling of uniform labeled trees]\label{prop:label-lifting}
Fix $n$ and $1\le r<n$. There exist a finite probability space
$(\Omega_{n,r},P_{n,r})$ and random elements
$L_r:\Omega_{n,r}\longrightarrow\mathcal T_{n,r}$ and
$U_{r+1}:\Omega_{n,r}\longrightarrow\mathcal T_{n,r+1}$ such that:
\begin{enumerate}[label=\textup{(\roman*)}]
  \item $L_r(\omega)\lesub U_{r+1}(\omega)$ for every
    $\omega\in\Omega_{n,r}$;
  \item $L_r$ is uniformly distributed on $\mathcal T_{n,r}$;
  \item $U_{r+1}$ is uniformly distributed on $\mathcal T_{n,r+1}$.
\end{enumerate}
\end{proposition}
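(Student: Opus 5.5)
Following the sketch in the introduction, I would build the space in three stages: draw a pair of shapes from the coupling of Theorem~\ref{thm:shape-coupling}, embed the smaller shape in the larger, and then place the larger tree in $K_n$ by a uniformly random injection. Concretely, for each pair $(\rho,\sigma)\in\mathcal R_r\times\mathcal R_{r+1}$ with $\pi_r(\rho,\sigma)>0$, I fix once and for all the following data:
\begin{itemize}
  \item a representative $R_\sigma$ on vertex set $[r+1]$ with root $o_\sigma$;
  \item a subtree $H_{\rho,\sigma}\lesub R_\sigma$ containing $o_\sigma$ and root-isomorphic to $R_\rho$, which exists by \eqref{eq:support}.
\end{itemize}
Let $\Omega_{n,r}$ be the set of triples $(\rho,\sigma,\phi)$ with $\pi_r(\rho,\sigma)>0$ and $\phi\in\Inj([r+1],[n])$. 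Give it the mass
\[
  P_{n,r}(\rho,\sigma,\phi)=\frac{\pi_r(\rho,\sigma)}{(n)_{r+1}} .
\]
Then define $U_{r+1}=\phi(R_\sigma)$ and $L_r=\phi(H_{\rho,\sigma})$, with roots forgotten. Both are actual tree subgraphs of $K_n$, and $\phi$ preserves containment, so (i) holds pointwise.

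For (iii), I would first compute the law of the rooted image $(\phi(R_\sigma),\phi(o_\sigma))\in\widehat{\mathcal T}_{n,r+1}$. Given $\sigma$, a uniform injection sends $R_\sigma$ to each rooted labeled copy of shape $\sigma$ in $K_n$ with equal probability. The number of such copies is $(n)_{r+1}/a(\sigma)$, since each copy is hit by exactly $a(\sigma)$ injections. Combining this with the second marginal \eqref{eq:second-marginal} and Lemma~\ref{lem:shape-probability}, a fixed rooted tree $(H,v)$ of shape $\sigma$ has probability
\[
  \nu_{r+1}(\sigma)\cdot\frac{a(\sigma)}{(n)_{r+1}}
  =\frac{(r+1)!}{(r+1)^{r}\,(n)_{r+1}} .
\]
This value does not depend on $(H,v)$, so the rooted image is uniform on $\widehat{\mathcal T}_{n,r+1}$. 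Each tree in $\mathcal T_{n,r+1}$ carries exactly $r+1$ roots, so forgetting the root leaves $U_{r+1}$ uniform.

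The step needing the most care is (ii), because $L_r$ is not produced by its own uniform injection. I would handle it by conditioning on $(\rho,\sigma)$. The restriction of $\phi$ to $V(H_{\rho,\sigma})$ is a uniform injection of an $r$-set into $[n]$, since the uniform law on $\Inj([r+1],[n])$ pushes forward to the uniform law on injections of any fixed subset. Composing with a fixed root-preserving isomorphism $R_\rho\to H_{\rho,\sigma}$ shows that $(L_r,\phi(o_\sigma))$ has the same conditional law as the image of $R_\rho$ under a uniform injection of $V(R_\rho)$. That law does not depend on $\sigma$, so summing over $\sigma$ and using \eqref{eq:first-marginal} reduces (ii) to the computation of the previous paragraph with $r$ in place of $r+1$. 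The $r=1$ case and the automorphism counting with $(n)_r$ should then be routine.
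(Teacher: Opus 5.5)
Your proposal is correct and follows the paper's proof essentially step for step. It uses the same space of triples $(\rho,\sigma,f)$ with mass $\pi_r(\rho,\sigma)/(n)_{r+1}$, the same automorphism count for the upper marginal, and the same appeal to \eqref{eq:first-marginal} for the lower one; the only difference is cosmetic, since your observation that restricting a uniform injection to an $r$-set gives a uniform injection is the paper's direct count of the $a(\rho_H)(n-r)$ injections, phrased as a pushforward.
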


\begin{proof}
\proofstep{Step 1: Fixing representatives and embeddings.}
Choose a coupling measure $\pi_r$ supplied by
Theorem~\ref{thm:shape-coupling}.
For every $\rho\in\mathcal R_r$ and $\sigma\in\mathcal R_{r+1}$, fix
representatives $R_\rho=(V_\rho,E_\rho,o_\rho)$ and
$R_\sigma=(V_\sigma,E_\sigma,o_\sigma)$.
If $\pi_r(\rho,\sigma)>0$, then the support condition \eqref{eq:support} in
Theorem~\ref{thm:shape-coupling} gives
$\rho\prert\sigma$. By the representative-independence in the definition of
$\prert$, the fixed representatives $R_\rho$ and $R_\sigma$ also admit such an
embedding. We may therefore fix a root-preserving embedding
$\iota_{\rho,\sigma}:V_\rho\longrightarrow V_\sigma$ that maps $R_\rho$ isomorphically onto a subtree of $R_\sigma$. Since only
finitely many pairs are involved, fix one such embedding for each pair.

\proofstep{Step 2: Defining the finite probability space.}
Let
\[
  \Omega_{n,r}
  =\bigl\{(\rho,\sigma,f):
     \pi_r(\rho,\sigma)>0,
     \ f\in\Inj(V_\sigma,[n])\bigr\}.
\]
Here $\Inj(V_\sigma,[n])$ denotes the set of injections from $V_\sigma$ to
$[n]$.
Since $|V_\sigma|=r+1$, we have
$|\Inj(V_\sigma,[n])|=(n)_{r+1}$. For each point
$\omega=(\rho,\sigma,f)$, define
\begin{equation}\label{eq:point-mass}
  P_{n,r}(\{\omega\})
  =\frac{\pi_r(\rho,\sigma)}{(n)_{r+1}}.
\end{equation}
Summing over all $f$, $\rho$, and $\sigma$, and using
$\sum_{\rho,\sigma}\pi_r(\rho,\sigma)=1$, shows that the total mass is $1$.

\proofstep{Step 3: Defining the graph-valued random elements.}
For $\omega=(\rho,\sigma,f)$, first define the actual rooted tree
\[
  \widehat U_{r+1}(\omega)
  =\bigl(f(R_\sigma),f(o_\sigma)\bigr),
\]
where $f(R_\sigma)$ has vertex set $f(V_\sigma)$ and edge set
\[
  \bigl\{\{f(x),f(y)\}:\{x,y\}\in E_\sigma\bigr\}.
\]
Similarly, define
\[
  \widehat L_r(\omega)
  =\bigl((f\circ\iota_{\rho,\sigma})(R_\rho),
  f(\iota_{\rho,\sigma}(o_\rho))\bigr).
\]
Because the embedding preserves the root, the latter root equals
$f(o_\sigma)$. Let $U_{r+1}(\omega)$ and $L_r(\omega)$ be the actual
graphs obtained by forgetting the roots. By construction,
\begin{equation}\label{eq:pointwise-containment}
  L_r(\omega)\lesub U_{r+1}(\omega)
  \qquad(\omega\in\Omega_{n,r}).
\end{equation}

\proofstep{Step 4: Uniformity of the upper marginal.}
Fix $\widehat H=(H,h)\in\widehat{\mathcal T}_{n,r+1}$, and let
$\sigma_H\in\mathcal R_{r+1}$ be its unlabeled rooted-tree shape. There are
exactly $a(\sigma_H)$ root-preserving isomorphisms from the fixed
representative $R_{\sigma_H}$ to $\widehat H$. Therefore, by
\eqref{eq:point-mass} and the second-marginal identity
\eqref{eq:second-marginal},
\begin{align}
  P_{n,r}\bigl(\widehat U_{r+1}=\widehat H\bigr)
  &=\sum_{\rho\in\mathcal R_r}
    \pi_r(\rho,\sigma_H)
    \frac{a(\sigma_H)}{(n)_{r+1}}\notag\\
  &=\nu_{r+1}(\sigma_H)
    \frac{a(\sigma_H)}{(n)_{r+1}}\notag\\
  &=\frac{(r+1)!}{(r+1)^r(n)_{r+1}}
   =\frac1{\binom n{r+1}(r+1)^r}.
  \label{eq:upper-rooted-uniform}
\end{align}
The right-hand side is independent of $\widehat H$, so
$\widehat U_{r+1}$ is uniformly distributed on
$\widehat{\mathcal T}_{n,r+1}$.

\proofstep{Step 5: Uniformity of the lower marginal.}
Fix $\widehat H=(H,h)\in\widehat{\mathcal T}_{n,r}$, and let
$\rho_H\in\mathcal R_r$ be its unlabeled rooted-tree shape. For each
$\sigma$ satisfying $\pi_r(\rho_H,\sigma)>0$, we count the injections
$f$ for which $\widehat L_r(\omega)=\widehat H$, where
$\omega=(\rho_H,\sigma,f)$.
First, there are $a(\rho_H)$ root-preserving isomorphisms from
$R_{\rho_H}$ to $\widehat H$. Once one of them has been chosen, the set
$V_\sigma\setminus\iota_{\rho_H,\sigma}(V_{\rho_H})$ contains exactly one
vertex. That vertex may receive any label in $[n]\setminus V(H)$, giving
$n-r$ choices. Hence the number of such injections is exactly
$a(\rho_H)(n-r)$. By the first-marginal identity
\eqref{eq:first-marginal},
\begin{align}
  P_{n,r}\bigl(\widehat L_r=\widehat H\bigr)
  &=\sum_{\sigma\in\mathcal R_{r+1}}
    \pi_r(\rho_H,\sigma)
    \frac{a(\rho_H)(n-r)}{(n)_{r+1}}\notag\\
  &=\nu_r(\rho_H)
    \frac{a(\rho_H)(n-r)}{(n)_{r+1}}\notag\\
  &=\frac{r!}{r^{r-1}(n)_r}
   =\frac1{\binom nr r^{r-1}}.
  \label{eq:lower-rooted-uniform}
\end{align}
Here we used $(n)_{r+1}=(n)_r(n-r)$. The right-hand side is independent of
$\widehat H$, so $\widehat L_r$ is uniformly distributed on
$\widehat{\mathcal T}_{n,r}$.

\begin{samepage}
\proofstep{Step 6: Forgetting the roots.}
Fix $H\in\mathcal T_{n,r}$. The distinct choices $h\in V(H)$ give $r$
distinct elements $(H,h)\in\widehat{\mathcal T}_{n,r}$. If $r\ge2$, then
\eqref{eq:lower-rooted-uniform} and Cayley's formula give
\[
  P_{n,r}(L_r=H)
  =\frac{r}{\binom nr r^{r-1}}
  =\frac1{\binom nr r^{r-2}}
  =\frac1{|\mathcal T_{n,r}|}.
\]
\end{samepage}
If $r=1$, then every $H\in\mathcal T_{n,1}$ has a unique root, and
\eqref{eq:lower-rooted-uniform} gives
\[
  P_{n,1}(L_1=H)
  =\frac1{\binom n1\,1^0}
  =\frac1n
  =\frac1{|\mathcal T_{n,1}|}.
\]
For $H'\in\mathcal T_{n,r+1}$, summing
\eqref{eq:upper-rooted-uniform} over its $r+1$ possible roots gives
\begin{align*}
  P_{n,r}(U_{r+1}=H')
  &=\sum_{h'\in V(H')}
    P_{n,r}\bigl(\widehat U_{r+1}=(H',h')\bigr)\\
  &=\frac{r+1}{\binom n{r+1}(r+1)^r}\\
  &=\frac1{\binom n{r+1}(r+1)^{r-1}}
  =\frac1{|\mathcal T_{n,r+1}|}.
\end{align*}
Together with \eqref{eq:pointwise-containment}, these identities prove the
proposition.
\end{proof}

\section{Ratios of Subtree Counts}\label{sec:ratio}

The following is the principal theorem of the paper.

\begin{theorem}[Monotonicity of ratios]\label{thm:ratio-monotonicity}
For every integer $n\geq1$ and every simple graph $G$ on the vertex set $[n]$,
the sequence
\[
  q_k(G)=\frac{s_k(G)}{s_k(K_n)}
  \qquad(1\le k\le n)
\]
satisfies
\[
  1=q_1(G)\ge q_2(G)\ge\cdots\ge q_n(G)\ge0.
\]
\end{theorem}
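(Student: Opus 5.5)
The plan is to read each ratio $q_k(G)$ as a probability and then use the pointwise nesting from Proposition~\ref{prop:label-lifting}.

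First, the endpoint facts. Since $s_1(G)=n=s_1(K_n)$, we have $q_1(G)=1$. Since $s_n(G)\ge0$ and $s_n(K_n)>0$, we have $q_n(G)\ge0$; the same holds for every $q_k(G)$. Because $s_k(K_n)=|\mathcal T_{n,k}|>0$, every ratio is well defined.

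If $n=1$ there is nothing more to prove. So fix $n\ge2$ and $1\le r<n$, and take the space $(\Omega_{n,r},P_{n,r})$ with random elements $L_r$ and $U_{r+1}$ given by Proposition~\ref{prop:label-lifting}.

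\begin{itemize}
\item By uniformity (ii), $P_{n,r}(L_r\lesub G)=|\{H\in\mathcal T_{n,r}:H\lesub G\}|/|\mathcal T_{n,r}|=q_r(G)$.
\item By uniformity (iii), $P_{n,r}(U_{r+1}\lesub G)=q_{r+1}(G)$.
\end{itemize}

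The key step is event inclusion. By (i), $L_r(\omega)\lesub U_{r+1}(\omega)$ for every $\omega$. The relation $\lesub$ is transitive, since containment of vertex sets and of edge sets is transitive. Hence
\[
  \{U_{r+1}\lesub G\}\subseteq\{L_r\lesub G\},
\]
and monotonicity of probability gives $q_{r+1}(G)\le q_r(G)$. Chaining these inequalities over $r=1,\dots,n-1$ gives the full statement.

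There is no real obstacle here, because all the substantive work is already done in Theorem~\ref{thm:shape-coupling} and Proposition~\ref{prop:label-lifting}. The only point needing care is that membership of an element of $\mathcal T_{n,k}$ as $H\lesub G$ is exactly the counting condition in the definition of $s_k(G)$. That identification is what turns each uniform probability into the ratio $q_k(G)$.
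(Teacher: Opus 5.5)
Your proposal is correct and matches the paper's proof: you take the nested coupling $(L_r,U_{r+1})$ from Proposition~\ref{prop:label-lifting}, use uniformity to identify $P_{n,r}(L_r\lesub G)=q_r(G)$ and $P_{n,r}(U_{r+1}\lesub G)=q_{r+1}(G)$, and conclude $q_{r+1}(G)\le q_r(G)$ from the event inclusion $\{U_{r+1}\lesub G\}\subseteq\{L_r\lesub G\}$. You also handle the endpoint facts $q_1(G)=1$ and nonnegativity, and the trivial case $n=1$, the same way the paper does.
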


\begin{proof}
The assertion is immediate when $n=1$. Assume $n\geq2$, and fix
$1\le r<n$. On the finite probability space constructed in
Proposition~\ref{prop:label-lifting}, define the events
\[
  A_r(G)=\{\omega\in\Omega_{n,r}:L_r(\omega)\lesub G\},
\]
\[
  B_{r+1}(G)=\{\omega\in\Omega_{n,r}:U_{r+1}(\omega)\lesub G\}.
\]
If $\omega\in B_{r+1}(G)$, then
\[
  L_r(\omega)\lesub U_{r+1}(\omega)\lesub G
\]
by \eqref{eq:pointwise-containment} and the transitivity of the subgraph relation. Consequently,
\[
  B_{r+1}(G)\subseteq A_r(G).
\]
The uniform marginal distributions of $L_r$ and $U_{r+1}$ give
\[
  P_{n,r}(A_r(G))
  =\frac{|\{H\in\mathcal T_{n,r}:H\lesub G\}|}{|\mathcal T_{n,r}|}
  =\frac{s_r(G)}{s_r(K_n)},
\]
and
\[
  P_{n,r}(B_{r+1}(G))
  =\frac{|\{H\in\mathcal T_{n,r+1}:H\lesub G\}|}{|\mathcal T_{n,r+1}|}
  =\frac{s_{r+1}(G)}{s_{r+1}(K_n)}.
\]
Taking probabilities in the event inclusion yields
\[
  \frac{s_{r+1}(G)}{s_{r+1}(K_n)}
  \le
  \frac{s_r(G)}{s_r(K_n)}.
\]
Applying this inequality for $r=1,\dots,n-1$ proves monotonicity. Moreover, $s_1(G)=s_1(K_n)=n$, so $q_1(G)=1$. Nonnegativity follows directly from the definition.
\end{proof}

\section{The Maximum Mean Subtree Order}\label{sec:mean}

\begin{lemma}[Double-sum identity]\label{lem:double-sum}
Let $n\geq1$. If $a_k>0$ and $q_k\ge0$ for $1\leq k\leq n$, and if
$\sum_{k=1}^n a_kq_k>0$, then
\begin{equation}\label{eq:double-sum}
  \frac{\sum_{k=1}^n k a_k}{\sum_{k=1}^n a_k}
  -
  \frac{\sum_{k=1}^n k a_kq_k}{\sum_{k=1}^n a_kq_k}
  =
  \frac{
    \displaystyle\sum_{1\le i<j\le n}
    a_i a_j(j-i)(q_i-q_j)
  }{
    \displaystyle
    \left(\sum_{k=1}^n a_k\right)
    \left(\sum_{k=1}^n a_kq_k\right)
  }.
\end{equation}
\end{lemma}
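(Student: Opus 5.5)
Write $A=\sum_k a_k>0$ and $B=\sum_k a_kq_k>0$; both denominators are positive by hypothesis, so all expressions in \eqref{eq:double-sum} are well defined. The plan is to put the left-hand side over the common denominator $AB$ and show that the resulting numerator,
\[
  N=\Bigl(\sum_{i}i a_i\Bigr)\Bigl(\sum_j a_jq_j\Bigr)-\Bigl(\sum_j a_j\Bigr)\Bigl(\sum_i i a_iq_i\Bigr),
\]
equals the antisymmetrized double sum on the right. This is a purely algebraic identity; the positivity hypotheses only guarantee that we may divide.

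First expand both products as double sums over all ordered pairs $(i,j)\in[n]^2$:
\[
  N=\sum_{i,j}a_ia_j\,i\,q_j-\sum_{i,j}a_ia_j\,i\,q_i=\sum_{i,j}a_ia_j\,i\,(q_j-q_i).
\]
Next symmetrize by swapping the names $i\leftrightarrow j$. The same quantity equals $\sum_{i,j}a_ia_j\,j\,(q_i-q_j)$. Averaging the two expressions gives
\[
  N=\tfrac12\sum_{i,j}a_ia_j\,(j-i)(q_i-q_j).
\]
The diagonal terms $i=j$ vanish. Each unordered pair $\{i,j\}$ with $i\neq j$ appears twice, and the summand $a_ia_j(j-i)(q_i-q_j)$ is symmetric under the swap. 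So $N=\sum_{i<j}a_ia_j(j-i)(q_i-q_j)$, and dividing by $AB$ yields \eqref{eq:double-sum}.

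There is no real obstacle. The only point needing care is the sign bookkeeping in the symmetrization step, which I will verify by checking that the summand $(j-i)(q_i-q_j)$ is invariant under $i\leftrightarrow j$.

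For context, in the application one takes $a_k=s_k(K_n)$ and $q_k=q_k(G)$. By Theorem~\ref{thm:ratio-monotonicity}, every term on the right-hand side is nonnegative, which gives $\mu(G)\le\mu(K_n)$.
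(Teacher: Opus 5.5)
Your proof is correct and follows essentially the same route as the paper. Both put the left side over the common denominator and expand the numerator as $\sum_{i,j} i\,a_ia_j(q_j-q_i)$. Both then pair the $(i,j)$ and $(j,i)$ terms to obtain $\sum_{i<j}a_ia_j(j-i)(q_i-q_j)$; your averaging over the relabelled sum is the same pairing written differently.
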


\begin{proof}
After passing to a common denominator, the numerator on the left-hand side is
\[
  \sum_{i,j=1}^n i a_i a_j(q_j-q_i).
\]
For each $i<j$, combine the terms indexed by $(i,j)$ and $(j,i)$ to obtain
\[
  i a_i a_j(q_j-q_i)+j a_j a_i(q_i-q_j)
  =a_i a_j(j-i)(q_i-q_j).
\]
Summing over all $i<j$ gives \eqref{eq:double-sum}.
\end{proof}

\begin{corollary}[Complete graphs maximize mean subtree order]\label{thm:main}
For every simple graph $G$ of order $n\geq1$,
\[
  \mu(G)\le\mu(K_n).
\]
Equality holds if and only if $G$ is complete, equivalently, if and only if
$G\cong K_n$.
\end{corollary}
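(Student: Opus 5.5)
We apply Lemma~\ref{lem:double-sum} with $a_k=s_k(K_n)$ and $q_k=q_k(G)=s_k(G)/s_k(K_n)$, where $G$ has been relabeled to have vertex set $[n]$. Every $a_k$ is positive: $a_1=n$ and $a_k=\binom nk k^{k-2}\ge1$ for $2\le k\le n$. Each $q_k$ is nonnegative, and
\[
  \sum_k a_kq_k=\sum_k s_k(G)\ge s_1(G)=n>0,
\]
so the lemma applies. Since $a_kq_k=s_k(G)$, the left-hand side of \eqref{eq:double-sum} is exactly $\mu(K_n)-\mu(G)$. By Theorem~\ref{thm:ratio-monotonicity}, $q_i\ge q_j$ whenever $i<j$. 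Hence every term $a_ia_j(j-i)(q_i-q_j)$ in the numerator is nonnegative, and the denominator is positive. This gives $\mu(G)\le\mu(K_n)$.

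For the equality case, if $G$ is complete then $G=K_n$ on $[n]$, so $q_k=1$ for all $k$ and the difference is $0$. Conversely, suppose $G$ is not complete; then $n\ge2$. It suffices to find a single pair $i<j$ with $q_i>q_j$, since that pair contributes a strictly positive term while all other terms are nonnegative. I will take $i=1$ and $j=2$. Here $q_1=1$, and $q_2=|E(G)|/\binom n2<1$ because some edge of $K_n$ is missing from $G$. Thus the numerator is at least $a_1a_2(q_1-q_2)>0$, so $\mu(G)<\mu(K_n)$.

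Finally, I will note why ``$G$ complete'' is equivalent to ``$G\cong K_n$'' for a graph of order $n$. A graph of order $n$ is isomorphic to $K_n$ exactly when it has $\binom n2$ edges, and this is the same as being complete. Both $\mu$ and the counts $s_k$ are isomorphism invariants, which justifies the relabeling onto $[n]$ made at the start.

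There is no real obstacle: all the substance lies in Theorem~\ref{thm:ratio-monotonicity}. The only points needing care are checking the positivity hypotheses of Lemma~\ref{lem:double-sum} and making sure that strictness comes from the single pair $(1,2)$, without needing any strict inequality between higher-order ratios.
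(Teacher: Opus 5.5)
Your proposal is correct and follows the paper's proof essentially step for step. You apply the double-sum identity with $a_k=s_k(K_n)$ and $q_k=s_k(G)/s_k(K_n)$, invoke the ratio monotonicity theorem for nonnegativity, and get strictness from the single pair $(1,2)$ via $q_2=|E(G)|/\binom n2<1$; absorbing the case $n=1$ into the empty double sum rather than treating it separately, as the paper does, is harmless.
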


\begin{proof}
If $n=1$, then every simple graph of order $1$ is isomorphic to $K_1$, and
$\mu(G)=\mu(K_1)=1$. Thus both the inequality and the equality characterization
hold. Suppose that $n\ge2$. After relabeling $G$ so that $V(G)=[n]$, set
$a_k=s_k(K_n)$ and $q_k=\frac{s_k(G)}{s_k(K_n)}$.
Then $a_k>0$, $q_k\ge0$, and $s_k(G)=a_kq_k$. Moreover,
\[
  \sum_{k=1}^n a_kq_k
  =\sum_{k=1}^n s_k(G)
  \geq s_1(G)=n>0.
\]
The hypotheses of Lemma~\ref{lem:double-sum} are therefore satisfied, and
\begin{align*}
  \mu(K_n)&=\frac{\sum_{k=1}^n k a_k}{\sum_{k=1}^n a_k},\\
  \mu(G)&=\frac{\sum_{k=1}^n k a_kq_k}{\sum_{k=1}^n a_kq_k}.
\end{align*}
By Theorem~\ref{thm:ratio-monotonicity}, $q_i-q_j\ge0$ whenever $i<j$.
Every summand on the right-hand side of \eqref{eq:double-sum} is therefore
nonnegative, so
\[
  \mu(K_n)-\mu(G)\ge0.
\]
If $G$ is complete, then all $q_k=1$, so equality holds. Conversely, if $G$
is not complete, then $q_1=1$ and
\[
  q_2=\frac{|E(G)|}{\binom n2}<1.
\]
The summand $a_1a_2(q_1-q_2)$ of \eqref{eq:double-sum}, corresponding to
$(i,j)=(1,2)$, is strictly positive. All remaining summands are nonnegative,
so $\mu(K_n)>\mu(G)$.
\end{proof}

\begin{remark}
By Cayley's formula, the mean subtree order of the complete graph can be written as
\[
  \mu(K_n)=
  \frac{
    n+\displaystyle\sum_{k=2}^n k\binom nk k^{k-2}
  }{
    n+\displaystyle\sum_{k=2}^n \binom nk k^{k-2}
  }.
\]
\end{remark}

\section{Concluding Remarks}\label{sec:conclusion}

Theorem~\ref{thm:ratio-monotonicity} compares the ratios of subtree counts
across all orders and yields the unique maximum in Corollary~\ref{thm:main}.
It does not establish the local conjecture of Cameron and Mol that every
noncomplete connected graph has a missing edge whose addition increases the
mean subtree order \parencite{CameronMol2021}.

\section*{Data Availability Statement}
Data sharing is not applicable to this article as no datasets were generated
or analyzed during this study.


\end{document}